\newtheorem{lemme}{Lemma}
\newtheorem{theorem}{Theorem}
\newtheorem{remark}{Remark}
\begin{document}

\title [Huygens type inequalities for Bessel and modified Bessel Functions ]{ Extension of Huygens type inequalities for Bessel and modified Bessel Functions \\}%

\author[ Khaled Mehrez]{KHALED MEHREZ }
\address{Khaled Mehrez. D\'epartement de Math\'ematiques ISSAT Kasserine, Universit\'e de Kairouan, Tunisia.}
 \email{k.mehrez@yahoo.fr}
\begin{abstract}
 In this paper, new sharpened Huygens type inequalities involving Bessel and  modified Bessel functions
are established. 
\end{abstract}
\maketitle
{\it keywords:} The Bessel functions, The modified Bessel functions, Huygens type inequalities.  \\

\section{\textbf{Introduction}}

This inequality 
\begin{equation}\label{0001}
2\frac{\sin x}{x}+\frac{\tan x}{x}>3
\end{equation}
which holds for all $x\in(0,\pi/2)$  is known in literature as Huygens's inequality \cite{H}. The hyperbolic counterpart of (\ref{0001}) was established in \cite{N} as follows:
\begin{equation}\label{0002}
2\frac{\sinh x}{x}+\frac{\tanh x}{x}>3,\;x>0. 
\end{equation}
The inequalities (\ref{0001}) and (\ref{0002}) were respectively refined in \cite{H} as 
\begin{equation}
2\frac{\sin x}{x}+\frac{\tan x}{x}>2\frac{x}{\sin x}+\frac{x}{\tan x}>3
\end{equation}
for $0<x<\frac{\pi}{2}$ and 
\begin{equation}
2\frac{\sinh x}{x}+\frac{\tanh x}{x}>2\frac{x}{\sinh x}+\frac{x}{\tanh x}>3,\;x\neq0.
\end{equation}
Recently, in \cite {zhu7}, Zhu give some new inequalities of the Huygens type for circular functions,
hyperbolic functions, and the reciprocals of circular and hyperbolic functions, as follows:\\
\textbf{Theorem A}
The following inequalities 
\begin{equation}\label{sw02}
(1-p)\frac{x}{\sin x}+p\frac{x}{\tan x}>1>(1-q)\frac{x}{\sin x}+q\frac{x}{\tan x}
\end{equation}
holds for all $x\in(0, \pi/2)$ if and only if $p\leq1/3$ and $q\geq1-2/\pi.$\\
\noindent\textbf{Theorem B} The following inequalities 
\begin{equation}\label{sw01}
(1-p)\frac{\sin x}{x}+p\frac{\tan x}{x}>1>(1-q)\frac{\sin x}{x}+q\frac{\tan x}{x}
\end{equation}
holds for all $x\in(0, \pi/2)$ if and only if $p\geq1/3$ and $q\leq 0.$\\
\textbf{Theorem C}
 The following inequalities 
\begin{equation}\label{sw03}
(1-p)\frac{\sinh x}{x}+p\frac{\tanh x}{x}>1>(1-q)\frac{\sinh x}{x}+q\frac{\tanh x}{x}
\end{equation}
holds for all $x\in(0, \infty)$ if and only if $p\leq1/3$ and $q\geq 1.$\\
\textbf{Theorem D} The following inequalities 
\begin{equation}\label{sw04}
(1-p)\frac{x}{\sinh x}+p\frac{x}{\tanh x}>1>(1-q)\frac{x}{\sinh x}+q\frac{x}{\tanh x}
\end{equation}
holds for all $x\in(0, \infty)$ if and only if $p\geq1/3$ and $q\leq 0$.\\
 
In this paper, we first give a generalizations of inequalities (\ref{sw02}) and (\ref{sw01}) to Bessel functions of the first kind and present an conjecture, which may be of interest for further research. Second, we extend and sharpen inequalities  (\ref{sw03}) and (\ref{sw04}) for the modified Bessel functions of the first kind.

\section{\textbf{Lemmas}}
We begin this section with the following useful lemmas which are needed to completes
the proof of the main theorems.
\begin{lemme}\label{l1}\cite{kh, and, pin} Let $f,g:[a,b]\longrightarrow\mathbb{R}$ be two continuous functions which are differentiable on $(a,b)$. Further, let $g^{'}\neq0$ on $(a,b).$ If $\frac{f^{\prime}}{g^{\prime}}$ is increasing (or decreasing) on $(a,b)$, then the functions $\frac{f(x)-f(a)}{g(x)-g(a)}$ and $\frac{f(x)-f(b)}{g(x)-g(b)}$ are also increasing (or decreasing) on $(a,b).$
\end{lemme}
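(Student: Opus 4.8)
The plan is to prove this monotone form of l'Hôpital's rule via Cauchy's Mean Value Theorem, treating only the increasing case since the decreasing case follows at once by replacing $f$ with $-f$. First I would record a preliminary observation: since $g'$ never vanishes on $(a,b)$ and derivatives enjoy the Darboux intermediate value property, $g'$ keeps a constant sign throughout $(a,b)$; consequently $g$ is strictly monotone, and in particular $g(x)-g(a)\neq 0$ and $g(x)-g(b)\neq 0$ for every $x\in(a,b)$. This guarantees that the two quotients are well defined and differentiable on $(a,b)$.

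Next I would set $F(x)=\frac{f(x)-f(a)}{g(x)-g(a)}$ and compute its derivative, whose sign is governed by the numerator $N(x)=f'(x)[g(x)-g(a)]-[f(x)-f(a)]g'(x)$. The key step is to apply Cauchy's Mean Value Theorem on $[a,x]$ to produce a point $\xi\in(a,x)$ with $\frac{f(x)-f(a)}{g(x)-g(a)}=\frac{f'(\xi)}{g'(\xi)}$. Substituting $f(x)-f(a)=\frac{f'(\xi)}{g'(\xi)}[g(x)-g(a)]$ into $N(x)$ and factoring yields
\[
N(x)=g'(x)\,[g(x)-g(a)]\left(\frac{f'(x)}{g'(x)}-\frac{f'(\xi)}{g'(\xi)}\right).
\]
Because $f'/g'$ is increasing and $\xi<x$, the parenthesized factor is nonnegative; because $g$ is strictly monotone, $g'(x)$ and $g(x)-g(a)$ share the same sign, so their product is positive. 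Hence $N(x)\geq 0$, $F'(x)\geq 0$, and $F$ is increasing on $(a,b)$.

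For the second quotient $G(x)=\frac{f(x)-f(b)}{g(x)-g(b)}$ I would argue symmetrically, applying Cauchy's Mean Value Theorem on $[x,b]$ to obtain $\eta\in(x,b)$ with $\frac{f(x)-f(b)}{g(x)-g(b)}=\frac{f'(\eta)}{g'(\eta)}$; the analogous factorization expresses the sign of $G'(x)$ through $g'(x)\,[g(x)-g(b)]\bigl(\frac{f'(x)}{g'(x)}-\frac{f'(\eta)}{g'(\eta)}\bigr)$. Now $\eta>x$ forces the last factor to be nonpositive, while $g'(x)[g(x)-g(b)]$ is negative (since $x<b$), so their product is again nonnegative and $G$ is increasing.

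The routine part is the quotient-rule algebra and the factorization. The main point requiring care is the sign bookkeeping: one must verify that $g'(x)[g(x)-g(a)]>0$ and $g'(x)[g(x)-g(b)]<0$ hold uniformly on $(a,b)$, which is exactly where the constant-sign conclusion for $g'$ coming from the Darboux property is indispensable, since it lets one dispatch the $g'>0$ and $g'<0$ cases simultaneously rather than assuming a particular orientation of $g$.
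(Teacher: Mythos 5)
Your proof is correct: the Darboux-property argument that $g'$ keeps a constant sign (so both denominators are nonzero and the endpoint sign bookkeeping $g'(x)[g(x)-g(a)]>0$, $g'(x)[g(x)-g(b)]<0$ holds), the Cauchy mean-value factorization $N(x)=g'(x)[g(x)-g(a)]\left(\frac{f'(x)}{g'(x)}-\frac{f'(\xi)}{g'(\xi)}\right)$, and the reduction of the decreasing case to the increasing one via $-f$ are all sound. Note that the paper itself gives no proof of this lemma --- it is simply quoted from the cited references --- and your argument is essentially the standard proof of the monotone l'H\^opital rule appearing in those sources, so there is no genuinely different route to contrast.
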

\begin{lemme}\label{l2}\cite{ponn} Let $a_n$ and $b_n\;(n=0,1,2,...)$ be real numbers,  and let the power series $A(x)=\sum_{n=0}^{\infty}a_{n}x^{n}$ and $B(x)=\sum_{n=0}^{\infty}b_{n}x^{n}$ be convergent for $|x|<R.$ If $b_n>0$ for $n=0,1,..,$ and if $\frac{a_n}{b_n}$ is strictly increasing(or decreasing) for $n=0,1,2...,$ then the function $\frac{A(x)}{B(x)}$ is strictly increasing (or decreasing) on $(0,R).$
\end{lemme}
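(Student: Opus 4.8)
The plan is to reduce the monotonicity of the quotient to the sign of a single auxiliary power series. Since $b_n > 0$ for every $n$ and $x \in (0,R)$, the denominator $B(x) = \sum_{n \geq 0} b_n x^n$ satisfies $B(x) \geq b_0 > 0$, so $B$ never vanishes on $[0,R)$ and $A/B$ is a well-defined differentiable function there. Differentiating term by term, which is legitimate strictly inside the radius of convergence, gives $(A/B)' = (A'B - AB')/B^2$. Because $B^2 > 0$, the monotonicity of $A/B$ is governed entirely by the sign of the numerator $W := A'B - AB'$.

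First I would expand $W$ as a power series via the Cauchy product. Writing $A'(x) = \sum_i i a_i x^{i-1}$ and $B'(x) = \sum_j j b_j x^{j-1}$ and collecting terms of equal total degree, one obtains
\[
W(x) = \sum_{m \geq 1} c_m\, x^{m-1}, \qquad c_m = \sum_{i=0}^{m} (2i - m)\, a_i\, b_{m-i}.
\]
The term $m = 0$ contributes nothing, so the sum effectively starts at $m = 1$. It therefore suffices to prove $c_m > 0$ for every $m \geq 1$ in the increasing case (and $c_m < 0$ in the decreasing case); combined with $x > 0$ this yields $W(x) > 0$ and hence $(A/B)'(x) > 0$ on $(0,R)$.

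The crux of the argument, and the step I expect to be the main obstacle, is pinning down the sign of each $c_m$. Here I would use a reflection/pairing device: in the sum defining $c_m$, pair the index $i$ with its reflection $m-i$. The self-paired central term, present only when $m$ is even, vanishes since $2i - m = 0$ there, while each genuine pair combines to
\[
(2i - m)\bigl(a_i b_{m-i} - a_{m-i} b_i\bigr) = (2i - m)\, b_i\, b_{m-i}\left(\frac{a_i}{b_i} - \frac{a_{m-i}}{b_{m-i}}\right).
\]
For $i < m/2$ one has $2i - m < 0$, and since $i < m-i$ the strict monotonicity of $a_n/b_n$ forces the bracketed difference to be negative as well; the product of two negative quantities is positive. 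Summing over all pairs shows $c_m > 0$, exactly as required.

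Finally I would assemble the pieces: every coefficient $c_m$ is strictly positive, so $W$ is a strictly positive power series on $(0,R)$, whence $(A/B)'(x) = W(x)/B(x)^2 > 0$ and $A/B$ is strictly increasing. The decreasing case follows verbatim by reversing each inequality, or simply by applying the increasing case to $-A/B$. The only technical points needing care are the term-by-term differentiation and the rearrangement into the Cauchy product, both justified by the absolute convergence of power series strictly inside their common disc of convergence.
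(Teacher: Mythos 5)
Your proof is correct, and in fact it supplies something the paper does not contain: the paper states this lemma with a citation to Ponnusamy--Vuorinen and gives no proof at all, so there is no internal argument to compare against. What you have reconstructed is precisely the classical proof of this result (it goes back to Biernacki and Krzy\.{z}): write $(A/B)'=(A'B-AB')/B^{2}$, expand the numerator as a power series with coefficients $c_{m}=\sum_{i=0}^{m}(2i-m)a_{i}b_{m-i}$, and determine the sign of each $c_{m}$ by pairing the index $i$ with $m-i$. Every step checks out: the self-paired term vanishes; each genuine pair contributes $(2i-m)\,b_{i}\,b_{m-i}\bigl(\frac{a_{i}}{b_{i}}-\frac{a_{m-i}}{b_{m-i}}\bigr)>0$ when $i<m/2$, since both factors $(2i-m)$ and the bracketed difference are negative; the pair $i=0$, $i=m$ is always present for $m\geq 1$, which gives strict positivity of $c_{m}$ and hence strict monotonicity; and the term-by-term differentiation and Cauchy-product rearrangement are justified by absolute convergence inside the disc. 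The reduction of the decreasing case to the increasing case via $-A$ is also valid, since $\frac{-a_{n}}{b_{n}}$ is then strictly increasing. The only cosmetic remark is that positivity of $B$ on $(0,R)$ needs only $b_{n}>0$ and $x>0$, exactly as you say, so the proof is complete as written.
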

\section{\textbf{Extensions of Huygens type inequalities to Bessel functions}}

In this section, our aim is to extend the inequalities (\ref{sw02}) and (\ref{sw01}) to Bessel functions of the first
kind. For this suppose that $\nu>-1$ and consider the function $\mathcal{J}_{\nu}:\mathbb{R}\longrightarrow(-\infty, 1],$ defined by
\begin{equation*}
\mathcal{J}_{\nu}(x)=2^\nu\Gamma(\nu+1)x^{-\nu}J_{\nu}(x)=\sum_{n\geq}\frac{\left(\frac{-1}{4}\right)^n}{(\nu+1)_n n!}x^{2n},
\end{equation*}
where $\Gamma$ is the gamma function, $(\nu+1)_n=\Gamma(\nu+n+1)/\Gamma(\nu+1)$ for each $n\geq0$, is the well-known Pochhammer (or Appell) symbol, and $J_\nu$ defined by
$$J_\nu(x)=\sum_{n\geq0}\frac{(-1)^n(x/2)^{\nu+2n}}{n!\Gamma(\nu+n+1)},$$
stands for the Bessel function of the first kind of order $\nu$. It is worth mentioning that in
particular the function $J_\nu$  reduces to some elementary functions, like sine and cosine.
More precisely, in particular we have:
\begin{equation}\label{ee}
\mathcal{J}_{-1/2}(x)=\sqrt{\pi/2}.x^{1/2}J_{-1/2}(x)=\cos x,
\end{equation}
\begin{equation}\label{ee1}
\mathcal{J}_{1/2}(x)=\sqrt{\pi/2}.x^{-1/2}J_{1/2}(x)=\frac{\sin x}{x},
\end{equation}
respectively, which can verified easily by using the series representation of the function $
J_\nu$  and of the cosine and sine functions, respectively. Taking into account the relations (\ref{ee}) and (\ref{ee1}) as we mentioned above, the inequalities (\ref{sw02}) and (\ref{sw01}) can be rewritten in terms of $\mathcal{J}_{-1/2}(x)$ and $\mathcal{J}_{1/2}(x).$ For example, using (\ref{ee}) and  (\ref{ee1}) the inequalities (\ref{sw02}) and (\ref{sw01}) can be rewritten as
\begin{equation}
(1-p)\frac{1}{\mathcal{J}_{1/2}(x)}+p\frac{\mathcal{J}_{-1/2}(x)}{\mathcal{J}_{1/2}(x)}>1>(1-q)\frac{1}{\mathcal{J}_{1/2}(x)}+q\frac{\mathcal{J}_{-1/2}(x)}{\mathcal{J}_{1/2}(x)}
\end{equation}
and
\begin{equation}
(1-p)\mathcal{J}_{1/2}(x)+p\frac{\mathcal{J}_{1/2}(x)}{\mathcal{J}_{-1/2}(x)}>1>(1-q)\mathcal{J}_{1/2}(x)+q\frac{\mathcal{J}_{1/2}(x)}{\mathcal{J}_{-1/2}(x)}
\end{equation}
and thus it is natural to ask what is the general form of the inequalities (\ref{sw02}) and (\ref{sw01}) for
arbitrary $\nu$.\\ 

Our first main result is an extension of inequalities (\ref{sw02}) to Bessel functions of the first kind $J_\nu.$

\begin{theorem} Let $\nu>-1$ and  let $j_{\nu,1}$ the first positive zero of the Bessel function $J_\nu$ of the first kind. Then the Huygens types inequalities 
\begin{equation}\label{051}
(1-p)\frac{1}{\mathcal{J}_{\nu+1}(x)}+p\frac{\mathcal{J}_{\nu}(x)}{\mathcal{J}_{\nu+1}(x)}>1>(1-q)\frac{1}{\mathcal{J}_{\nu+1}(x)}+q\frac{\mathcal{J}_{\nu}(x)}{\mathcal{J}_{\nu+1}(x)},
\end{equation}
holds for all $x\in(0,j_{\nu,1})$ if and only if $p\leq\frac{\nu+1}{\nu+2}$ and $q\geq1-\mathcal{J}_\nu(j_{\nu,1}).$
\end{theorem}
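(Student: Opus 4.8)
The plan is to reduce the two-sided inequality to a single monotonicity statement and then to establish that monotonicity via the differentiation rule for $\mathcal{J}_\nu$ together with its Weierstrass factorization. First I would record the signs on $(0,j_{\nu,1})$: since $j_{\mu,1}$ is strictly increasing in $\mu$ one has $j_{\nu,1}<j_{\nu+1,1}$, so $\mathcal{J}_{\nu+1}(x)>0$ there, while $\mathcal{J}_\nu$ decreases from $1$ to $0$, giving $\mathcal{J}_\nu(x)\in(0,1)$. Multiplying the left inequality in (\ref{051}) by $\mathcal{J}_{\nu+1}(x)>0$ and rearranging turns it into $p\,(1-\mathcal{J}_\nu(x))<1-\mathcal{J}_{\nu+1}(x)$, that is $p<\Phi(x)$, where
\[
\Phi(x):=\frac{1-\mathcal{J}_{\nu+1}(x)}{1-\mathcal{J}_\nu(x)};
\]
the same computation shows the right inequality is equivalent to $q>\Phi(x)$. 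Hence (\ref{051}) holds on the whole interval exactly when $p\le\inf_{(0,j_{\nu,1})}\Phi$ and $q\ge\sup_{(0,j_{\nu,1})}\Phi$, and everything reduces to locating these two extrema.

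To compute them I would prove that $\Phi$ is strictly increasing by means of Lemma \ref{l1}. Setting $f=1-\mathcal{J}_{\nu+1}$ and $g=1-\mathcal{J}_\nu$, we have $f(0)=g(0)=0$, so $\Phi=\frac{f-f(0)}{g-g(0)}$, and by Lemma \ref{l1} it suffices that $f'/g'$ be increasing. Using the differentiation formula $\mathcal{J}_\mu'(x)=-\frac{x}{2(\mu+1)}\mathcal{J}_{\mu+1}(x)$, which follows at once by differentiating the series defining $\mathcal{J}_\mu$, a direct computation gives
\[
\frac{f'(x)}{g'(x)}=\frac{\nu+1}{\nu+2}\cdot\frac{\mathcal{J}_{\nu+2}(x)}{\mathcal{J}_{\nu+1}(x)},
\]
so the problem is reduced to showing that $x\mapsto \mathcal{J}_{\nu+2}(x)/\mathcal{J}_{\nu+1}(x)$ is increasing on $(0,j_{\nu,1})$.

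This last step is where I expect the real obstacle to lie. Since the power series of $\mathcal{J}_{\nu+1}$ and $\mathcal{J}_{\nu+2}$ have alternating coefficients, Lemma \ref{l2} cannot be applied to this ratio directly, so I would instead invoke the Weierstrass product $\mathcal{J}_\mu(x)=\prod_{n\ge1}\bigl(1-x^2/j_{\mu,n}^2\bigr)$ and differentiate logarithmically, obtaining
\[
\frac{d}{dx}\log\frac{\mathcal{J}_{\nu+2}(x)}{\mathcal{J}_{\nu+1}(x)}=2x\sum_{n\ge1}\left(\frac{1}{j_{\nu+1,n}^2-x^2}-\frac{1}{j_{\nu+2,n}^2-x^2}\right).
\]
Because $j_{\mu,n}$ is strictly increasing in $\mu$, every summand is positive for $x\in(0,j_{\nu,1})\subset(0,j_{\nu+1,1})$, so the derivative is positive; thus the ratio increases, whence $f'/g'$ and therefore $\Phi$ are strictly increasing.

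It then remains to evaluate the endpoints. As $x\to0^+$ we have $\mathcal{J}_{\nu+2}(x)/\mathcal{J}_{\nu+1}(x)\to1$, so $\inf\Phi=\lim_{x\to0^+}\Phi(x)=\lim_{x\to0^+}f'/g'=\frac{\nu+1}{\nu+2}$; as $x\to j_{\nu,1}^-$ we have $\mathcal{J}_\nu(x)\to\mathcal{J}_\nu(j_{\nu,1})=0$ and $\mathcal{J}_{\nu+1}(x)\to\mathcal{J}_{\nu+1}(j_{\nu,1})>0$, giving $\sup\Phi=\lim_{x\to j_{\nu,1}^-}\Phi(x)=1-\mathcal{J}_{\nu+1}(j_{\nu,1})$. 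Since $\Phi$ is \emph{strictly} increasing these values are approached but never attained on the open interval, so the strict inequalities in (\ref{051}) survive even at the extremal parameters; this yields $p\le\frac{\nu+1}{\nu+2}$ and $q\ge1-\mathcal{J}_{\nu+1}(j_{\nu,1})$, with necessity following from the continuity of $\Phi$ near each endpoint. (Note that because $\mathcal{J}_\nu(j_{\nu,1})=0$, the sharp constant for $q$ is most naturally expressed through $\mathcal{J}_{\nu+1}$ evaluated at the zero $j_{\nu,1}$.)
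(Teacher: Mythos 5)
Your proposal is correct and follows essentially the same route as the paper's own proof: the same reduction to the ratio $F_\nu(x)=\frac{1-\mathcal{J}_{\nu+1}(x)}{1-\mathcal{J}_{\nu}(x)}$, the same monotone l'Hospital rule (Lemma \ref{l1}), the same infinite product representation, and the same endpoint limits; your only variation is proving that $f'/g'=\frac{\nu+1}{\nu+2}\cdot\frac{\mathcal{J}_{\nu+2}(x)}{\mathcal{J}_{\nu+1}(x)}$ increases by logarithmic differentiation of the two products together with $j_{\nu+1,n}<j_{\nu+2,n}$, whereas the paper writes this same quantity as the single Mittag-Leffler sum $4(\nu+1)\sum_{n\ge1}\left(j_{\nu+1,n}^{2}-x^{2}\right)^{-1}$ and differentiates term by term, avoiding the need for monotonicity of the zeros in the order. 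You also correctly pin down the sharp constant as $1-\mathcal{J}_{\nu+1}(j_{\nu,1})$, which is exactly what the paper's proof derives, so the appearance of $\mathcal{J}_{\nu}(j_{\nu,1})$ in the theorem's statement of the $q$-threshold is a typo (that value is $0$).
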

\begin{proof} We define the function $F_{\nu}(x)$ on $(0,j_{\nu,1})$ by 
$$F_{\nu}(x)=\frac{\frac{1}{\mathcal{J}_{\nu+1}(x)}-1}{\frac{1}{\mathcal{J}_{\nu+1}(x)}-\frac{\mathcal{J}_{\nu}(x)}{\mathcal{J}_{\nu+1}(x)}}=\frac{1-\mathcal{J}_{\nu+1}(x)}{1-\mathcal{J}_{\nu}(x)}=\frac{h_{\nu,1}(x)}{h_{\nu,2}(x)},
$$ 
where $f_{\nu,1}(x)=1-\mathcal{J}_{\nu+1}(x)$ and $f_{\nu,2}(x)=1-\mathcal{J}_{\nu}(x).$ Now, by again using the differentiation formula
\begin{equation}\label{555}
\mathcal{J}_{\nu}^\prime(x)=-\frac{x}{2(\nu+1)}\mathcal{J}_{\nu+1}(x) 
\end{equation}
and the infinite product representation [\cite{wat}, p. 498]
\begin{equation}\label{5500}
\mathcal{J}_{\nu}(x)=\prod_{n\geq1}\left(1-\frac{x^{2}}{j_{\nu,n}^{2}}\right)
\end{equation}
we obtain that
\begin{equation*}
\begin{split}
\frac{f_{\nu,1}^\prime(x)}{f_{\nu,2}^\prime(x)}&=\frac{(\nu+1)\mathcal{J}_{\nu+2}(x)}{(\nu+2)\mathcal{J}_{\nu+1}(x)}\\
&=4(\nu+1)\sum_{n\geq1}\frac{1}{j_{\nu+1,n}^2-x^2}
\end{split}
\end{equation*}
So
$$\left(\frac{f_{\nu,1}^{\prime}(x)}{f_{\nu,2}^{\prime}(x)}\right)^{\prime}=8(\nu+1)\sum_{n\geq1}\frac{x}{(j_{\nu+1,n}^{2}-x^{2})^{2}}.$$
From this, we deduce that the function $\frac{f_{\nu,1}^{\prime}(x)}{f_{\nu,2}^{\prime}(x)}$ is increasing on $(0,j_{\nu,1}).$ Thus, the function $F_\nu(x)$ is also increasing on  $(0,j_{\nu,1})$ by Lemma \ref{l2}.

In view of $\lim_{x\longrightarrow0^+}F_{\nu}(x)=\frac{\nu+1}{\nu+2}$ and $\lim_{x\longrightarrow j_{\nu,1}}F_{\nu}(x)=1-\mathcal{J}_{\nu+1}(j_{\nu,1}).$ With this the proof is complete. 
\end{proof}    
\begin{theorem} Let $-1<\nu\leq0$ and  let $j_{\nu,1}$ the first positive zero of the Bessel function $J_\nu$ of the first kind. Then the Huygens type inequalities
\begin{equation}\label{050}
(1-p)\mathcal{J}_{\nu+1}(x)+p\frac{\mathcal{J}_{\nu+1}(x)}{\mathcal{J}_{\nu}(x)}>1>(1-q)\mathcal{J}_{\nu+1}(x)+q\frac{\mathcal{J}_{\nu+1}(x)}{\mathcal{J}_{\nu}(x)}
\end{equation}
holds for all $(x\in(0, j_{\nu,1}),$ if and only if, $p\geq\frac{\nu+1}{\nu+2}$ and $q\leq0.$
\end{theorem}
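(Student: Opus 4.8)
The plan is to recast the two-sided inequality (\ref{050}) as a two-sided bound on a single auxiliary function and then to prove that this function is monotone. Since $0<x<j_{\nu,1}<j_{\nu+1,1}$, both $\mathcal{J}_\nu(x)$ and $\mathcal{J}_{\nu+1}(x)$ are positive while $1-\mathcal{J}_\nu(x)>0$. Dividing the left inequality in (\ref{050}) by $\mathcal{J}_{\nu+1}(x)$ and isolating $p$, it becomes equivalent to $p>G_\nu(x)$, where
\[
G_\nu(x)=\frac{\mathcal{J}_\nu(x)\bigl(1-\mathcal{J}_{\nu+1}(x)\bigr)}{\mathcal{J}_{\nu+1}(x)\bigl(1-\mathcal{J}_\nu(x)\bigr)}=\frac{1/\mathcal{J}_{\nu+1}(x)-1}{1/\mathcal{J}_\nu(x)-1};
\]
the same computation turns the right inequality into $q<G_\nu(x)$. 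Hence (\ref{050}) holds for all $x\in(0,j_{\nu,1})$ if and only if $p\ge\sup G_\nu$ and $q\le\inf G_\nu$, and the theorem reduces to showing $\sup G_\nu=\frac{\nu+1}{\nu+2}$ and $\inf G_\nu=0$.

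To locate these extrema I would show that $G_\nu$ is strictly decreasing on $(0,j_{\nu,1})$. Writing $G_\nu=g_{\nu,1}/g_{\nu,2}$ with $g_{\nu,1}(x)=1/\mathcal{J}_{\nu+1}(x)-1$ and $g_{\nu,2}(x)=1/\mathcal{J}_\nu(x)-1$, both functions vanish at $0$, so Lemma \ref{l1} applies once the ratio of derivatives is understood. Using the differentiation formula (\ref{555}) one finds
\[
\frac{g_{\nu,1}'(x)}{g_{\nu,2}'(x)}=\frac{\nu+1}{\nu+2}\cdot\frac{\mathcal{J}_{\nu+2}(x)\,\mathcal{J}_\nu(x)^{2}}{\mathcal{J}_{\nu+1}(x)^{3}},
\]
so it suffices to prove that $R_\nu(x)=\mathcal{J}_{\nu+2}(x)\mathcal{J}_\nu(x)^{2}/\mathcal{J}_{\nu+1}(x)^{3}$ is decreasing.

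For the monotonicity of $R_\nu$ I would pass to the logarithmic derivative and insert the infinite product (\ref{5500}). Setting $S_\mu(x)=\sum_{n\ge1}(j_{\mu,n}^{2}-x^{2})^{-1}$, one has $\mathcal{J}_\mu'/\mathcal{J}_\mu=-2xS_\mu$, whence $R_\nu'/R_\nu=-2x\bigl(2S_\nu+S_{\nu+2}-3S_{\nu+1}\bigr)$. Thus $R_\nu$ is decreasing precisely when
\[
2\sum_{n\ge1}\frac{1}{j_{\nu,n}^{2}-x^{2}}+\sum_{n\ge1}\frac{1}{j_{\nu+2,n}^{2}-x^{2}}>3\sum_{n\ge1}\frac{1}{j_{\nu+1,n}^{2}-x^{2}}.
\]
Equivalently, writing $\Delta_\mu=S_\mu-S_{\mu+1}>0$ (positive because $j_{\mu,n}$ increases with $\mu$), the requirement is $2\Delta_\nu>\Delta_{\nu+1}$, which follows from convexity of $\mu\mapsto S_\mu(x)$ in the order. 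Proving this last inequality is the crux of the argument and the only place where the hypothesis $-1<\nu\le0$ should enter: it must be extracted from the monotonicity and convexity of the zeros $j_{\mu,n}$ as functions of the order $\mu$ (equivalently, via $S_\mu=\mathcal{J}_{\mu+1}/\bigl(4(\mu+1)\mathcal{J}_\mu\bigr)$, from a Tur\'an-type three-term relation among $\mathcal{J}_\nu,\dots,\mathcal{J}_{\nu+3}$). I expect this spacing/convexity estimate for Bessel zeros to be the main obstacle, since the competing monotonicities of $\mathcal{J}_{\nu+2}/\mathcal{J}_{\nu+1}$ (increasing) and $\mathcal{J}_\nu/\mathcal{J}_{\nu+1}$ (decreasing) leave $R_\nu$ not obviously monotone without it.

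Granting that $G_\nu$ is decreasing, it remains to evaluate the endpoints. A short expansion of the series for $\mathcal{J}_\nu$ and $\mathcal{J}_{\nu+1}$ gives $\lim_{x\to0^+}G_\nu(x)=\frac{\nu+1}{\nu+2}$, while at the other end $\mathcal{J}_\nu(j_{\nu,1})=0$ together with $\mathcal{J}_{\nu+1}(j_{\nu,1})\ne0$ forces $\lim_{x\to j_{\nu,1}}G_\nu(x)=0$. Monotonicity then yields $\sup G_\nu=\frac{\nu+1}{\nu+2}$ and $\inf G_\nu=0$, and by the reduction of the first paragraph this is exactly the asserted characterization $p\ge\frac{\nu+1}{\nu+2}$ and $q\le0$.
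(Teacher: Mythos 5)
Your reduction is sound and, in substance, identical to the paper's: after dividing by $\mathcal{J}_{\nu+1}(x)$, the two sides of (\ref{050}) become the comparisons $p>G_\nu(x)$ and $q<G_\nu(x)$ with
$G_\nu(x)=\frac{\mathcal{J}_\nu(x)\left(1-\mathcal{J}_{\nu+1}(x)\right)}{\mathcal{J}_{\nu+1}(x)\left(1-\mathcal{J}_\nu(x)\right)}$,
which is exactly the paper's function $G_\nu$ written in a different form; your endpoint limits $\frac{\nu+1}{\nu+2}$ at $0^{+}$ and $0$ at $j_{\nu,1}^{-}$ are correct, as is your computation
$\frac{g_{\nu,1}'(x)}{g_{\nu,2}'(x)}=\frac{\nu+1}{\nu+2}\cdot\frac{\mathcal{J}_{\nu+2}(x)\,\mathcal{J}_\nu^{2}(x)}{\mathcal{J}_{\nu+1}^{3}(x)}$
for your choice of splitting.

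However, there is a genuine gap at precisely the central step: you never prove that $R_\nu=\mathcal{J}_{\nu+2}\mathcal{J}_\nu^{2}/\mathcal{J}_{\nu+1}^{3}$ is decreasing. You reduce this to the inequality $2S_\nu+S_{\nu+2}>3S_{\nu+1}$ (equivalently $2\Delta_\nu>\Delta_{\nu+1}$) for the zero-sums $S_\mu(x)=\sum_{n\geq1}(j_{\mu,n}^{2}-x^{2})^{-1}$, and then state that it ``must be extracted from the monotonicity and convexity of the zeros'' and that you ``expect this \ldots{} to be the main obstacle.'' That is an acknowledgment that the key claim is unproven, not a proof: the convexity-type statement you invoke (whether of $\mu\longmapsto S_\mu(x)$ or of $\mu\longmapsto j_{\mu,n}$) is itself a nontrivial assertion about Bessel zeros for which you give no argument, and you also do not indicate how the hypothesis $-1<\nu\leq0$ would actually be used to obtain it. The paper avoids any such zero-spacing estimate by choosing a different splitting, $g_{\nu,1}=1-\mathcal{J}_{\nu+1}$ and $g_{\nu,2}=\mathcal{J}_{\nu+1}/\mathcal{J}_\nu-\mathcal{J}_{\nu+1}$, for which the derivative ratio takes the form $\left(1+L_\nu(x)/\mathcal{J}_\nu(x)\right)^{-1}$ with $L_\nu=\frac{\nu+2}{\nu+1}\cdot\frac{\mathcal{J}_{\nu+1}^{2}}{\mathcal{J}_\nu\mathcal{J}_{\nu+2}}-1$; positivity of $L_\nu$ comes from the Tur\'an-type inequality (\ref{5555}), monotonicity of $L_\nu$ is where the restriction $-1<\nu\leq0$ enters, $1/\mathcal{J}_\nu$ is increasing since $\mathcal{J}_\nu$ is positive and decreasing, and the monotone l'H\^opital rule (Lemma \ref{l1}) then gives that $G_\nu$ is decreasing. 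Until you supply a proof of $2S_\nu+S_{\nu+2}>3S_{\nu+1}$ on $(0,j_{\nu,1})$, or replace that step by a Tur\'an-type argument of this kind, your proposal does not establish the theorem.
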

\begin{proof} Let $\nu>-1$, consider the function 
$$G_\nu(x)=\frac{1-\mathcal{J}_{\nu+1}(x)}{\frac{\mathcal{J}_{\nu+1}(x)}{\mathcal{J}_{\nu}(x)}-\mathcal{J}_{\nu+1}(x)},\;\;0<x<j_{\nu,1}.$$
For $0<x<j_{\nu,1}$, let 
$$g_{\nu,1}(x)=1-\mathcal{J}_{\nu+1}(x)\;\textrm{and}\;g_{\nu,2}(x)=\frac{\mathcal{J}_{\nu+1}(x)}{\mathcal{J}_{\nu}(x)}-\mathcal{J}_{\nu+1}(x).$$
From the differentiation formula (\ref{555}), we get
$$\frac{g_{\nu,1}^{\prime}(x)}{g_{\nu,2}^{\prime}(x)}=\frac{1}{1+\frac{1}{\mathcal{J}_{\nu}(x)}\Bigg(\frac{\nu+2}{\nu+1}.\frac{\mathcal{J}_{\nu+1}^2(x)}{\mathcal{J}_{\nu}(x)\mathcal{J}_{\nu+2}(x)}-1\Bigg)}=\frac{1}{1+\frac{L_\nu(x)}{\mathcal{J}_{\nu}(x)}}$$
where 
$$L_\nu(x)=\frac{\nu+2}{\nu+1}.\frac{\mathcal{J}_{\nu+1}^2(x)}{\mathcal{J}_{\nu}(x)\mathcal{J}_{\nu+2}(x)}-1.$$
On other hand, using the Tur\'an type inequality [\cite{arb1}, eq. 2.9]
\begin{equation}\label{5555}
\mathcal{J}_{\nu+1}^2(x)-\mathcal{J}_{\nu}(x)\mathcal{J}_{\nu+2}(x)>0.
\end{equation}
where $\nu>-1$ and $x\in(-j_{\nu,1},j_{\nu,1}),$ we obtain that the function $L_\nu(x)$ is positive on $(0,j_{\nu,1}).$\\
Elementary  calculations reveal that
\begin{equation}
L_{\nu}^{\prime}(x)=
\frac{(\nu+2)x\mathcal{J}_{\nu+1}(x)}{(\nu+1)\mathcal{J}_{\nu}^{2}(x)\mathcal{J}_{\nu+2}(x)}\left[\frac{\mathcal{J}_{\nu+1}^{2}(x)}{2(\nu+1)}-\frac{\mathcal{J}_{\nu}(x)\mathcal{J}_{\nu+2}(x)}{\nu+2}\right]+\frac{(\nu+2)x\mathcal{J}_{\nu+1}^{2}(x)\mathcal{J}_{\nu+2}(x)}{2(\nu+3)(\nu+1)\mathcal{J}_{\nu}(x)\mathcal{J}_{\nu+2}^{2}(x)}\end{equation}
Using the fact that $\mathcal{J}_{\nu+1}(x)\geq\mathcal{J}_{\nu}(x)>0$ for all $x\in(0,j_{\nu,1})$ and the Tur\'an type inequality (\ref{5555}), we get
\begin{equation}
L_{\nu}^{\prime}(x)\geq \frac{-\nu x\mathcal{J}_{\nu+1}^{3}(x)}{2(\nu+1)^2\mathcal{J}_{\nu}^{2}(x)\mathcal{J}_{\nu+2}^{2}(x)}+\frac{(\nu+1)x\mathcal{J}_{\nu+1}^{2}(x)\mathcal{J}_{\nu+3}(x)}{2(\nu+3)(\nu+1)\mathcal{J}_{\nu}(x)\mathcal{J}_{\nu+2}^{2}(x)}
\end{equation}
Thus, we conclude that the function $L_\nu(x)$ is increasing on $(0,j_{\nu,1})$ for all $-1<\nu\leq0.$ Since the function $x\longmapsto\mathcal{J}_{\nu}(x)$ is decreasing (\cite{arb1}, Theorem 3) on $(0,j_{\nu,1}),$ we gave that the function $\frac{L_\nu(x)}{\mathcal{J}_{\nu}(x)}$ is increasing too on $(0,j_{\nu,1}),$ as a product of two positives increasing functions. Thus, the function  $\frac{g_{\nu,1}^{\prime}(x)}{g_{\nu,2}^{\prime}(x)}$ is decreasing on $(0,j_{\nu,1}).$ Then, the function
$$G_\nu(x)=\frac{g_{\nu,1}(x)}{g_{\nu,2}(x)}=\frac{g_{\nu,1}(x)-g_{\nu,1}(0)}{g_{\nu,2}(x)-g_{\nu,2}(0)}.$$
is decreasing on $(0,j_{\nu,1})$, by Lemma \ref{l2},\\
At the same time, we can write the function $G_\nu(x)$ in the following form
$$G_\nu(x)=\frac{\mathcal{J}_{\nu}(x)}{\mathcal{J}_{\nu+1}(x)}.F_\nu(x).$$
So
$$\lim_{x\longrightarrow0^{+}}G_\nu(x)=F_\nu(0)=\frac{\nu+1}{\nu+2}\;\;\textrm{and}\;\lim_{x\longrightarrow j_{\nu,1}}G_\nu(x)=0,$$
and with this the proof of inequalities (\ref{050}) is done. 
\end{proof}
\begin{remark}
Since $j_{-1/2,1}=\frac{\pi}{2}$ we find that the inequalities (\ref{051}) and (\ref{050}) is the generalization of inequalities (\ref{sw02}) and (\ref{sw01}).
\end{remark}
\noindent\textbf{Conjecture.} The function 
$$x\longmapsto G_\nu(x)=\frac{1-\mathcal{J}_{\nu+1}(x)}{\frac{\mathcal{J}_{\nu+1}(x)}{\mathcal{J}_{\nu}(x)}-\mathcal{J}_{\nu+1}(x)},$$
is decreasing on $(0,j_{\nu,1})$ and $\nu>-1.$ If our present conjecture were correct, then this would lead to a extended the inequalities (\ref{050}).\\

\section{\textbf{Extensions of the Huygens type inequalities to modified Bessel functions}}

In this section, we present a generalization of inequalities (\ref{sw03}) and (\ref{sw04}). For $\nu>-1$ let us consider the function $\mathcal{I}_{\nu}:\mathbb{R}\longrightarrow[1,\infty),$ defined by
\begin{equation*}
\mathcal{I}_{\nu}(x)=2^\nu\Gamma(\nu+1)x^{-\nu}I_{\nu}(x)=\sum_{n\geq}\frac{\left(\frac{1}{4}\right)^n}{(\nu+1)_n n!}x^{2n},
\end{equation*}
where $I_\nu$ is the  modified Bessel function of the first kind defined by
$$I_\nu(x)=\sum_{n\geq0}\frac{(x/2)^{\nu+2n}}{n!\Gamma(\nu+n+1)},\; \textrm{for all}\; x\in\mathbb{R}.$$
It is worth mentioning that in particular we have
\begin{equation}
\mathcal{I}_{-1/2}(x)=\sqrt{\pi/2}.x^{1/2}I_{-1/2}(x)=\cosh x,
\end{equation}
\begin{equation}
\mathcal{J}_{1/2}(x)=\sqrt{\pi/2}.x^{1/2}I_{-1/2}(x)=\frac{\sinh x}{x}.
\end{equation}
Thus, the function $I_\nu$ is of special interest in this paper because inequalities (\ref{sw03}) and  (\ref{sw04}) is actually
equivalent to
\begin{equation}\label{eee1}
\left(1-p\right)\mathcal{I}_{1/2}(x)+p\frac{\mathcal{I}_{1/2}(x)}{\mathcal{I}_{-1/2}(x)}>1>\left(1-q\right)\mathcal{I}_{1/2}(x)+q\frac{\mathcal{I}_{1/2}(x)}{\mathcal{I}_{-1/2}(x)},
\end{equation}
for all $x\in(0,\infty)$ if and only if $p\leq\frac{-1/2+1}{-1/2+2}=1/3$ and $q\geq1$, and
\begin{equation}\label{eee2}
\left(1-p\right)\frac{1}{\mathcal{I}_{-1/2+1}(x)}+p\frac{\mathcal{I}_{-1/2}(x)}{\mathcal{I}_{-1/2+1}(x)}>1>\left(1-q\right)\frac{1}{\mathcal{I}_{-1/2+1}(x)}+q\frac{\mathcal{I}_{-1/2}(x)}{\mathcal{I}_{-1/2+1}(x)},
\end{equation}
for all $x\in(0,\infty)$ if and only if $p\geq\frac{-1/2+1}{-1/2+2}=1/3$ and $q\leq0.$\\

So in view of inequalities (\ref{eee1}) and (\ref{eee2})  it is natural to ask: what is the analogue of this inequalities
for modified Bessel functions of the first kind? In order to answer this question we prove the following results.

\begin{theorem}\label{t3}
Let $\nu>-1$, the following inequalities 
\begin{equation}\label{mm0}
\left(1-p\right)\mathcal{I}_{\nu+1}(x)+p\frac{\mathcal{I}_{\nu+1}(x)}{\mathcal{I}_{\nu}(x)}>1>\left(1-q\right)\mathcal{I}_{\nu+1}(x)+q\frac{\mathcal{I}_{\nu+1}(x)}{\mathcal{I}_{\nu}(x)},
\end{equation}
holds for all $x\in(0,\infty)$ if and only if $p\leq\frac{\nu+1}{\nu+2}$ and $q\geq1$.
\end{theorem}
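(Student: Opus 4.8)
The plan is to follow the proof of the previous theorem, but the monotone auxiliary function will now be increasing, so the roles of the two endpoints (and hence the direction of the constraints on $p$ and $q$) are interchanged. I would first introduce
\[
H_\nu(x)=\frac{1-\mathcal{I}_{\nu+1}(x)}{\frac{\mathcal{I}_{\nu+1}(x)}{\mathcal{I}_{\nu}(x)}-\mathcal{I}_{\nu+1}(x)}=\frac{\mathcal{I}_{\nu}(x)}{\mathcal{I}_{\nu+1}(x)}\cdot\frac{\mathcal{I}_{\nu+1}(x)-1}{\mathcal{I}_{\nu}(x)-1},\qquad x>0,
\]
which is positive because $\mathcal{I}_{\mu}(x)>1$ on $(0,\infty)$ makes both the numerator and the denominator of the first fraction negative. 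Writing $A=\mathcal{I}_{\nu+1}(x)$ and $B=\mathcal{I}_{\nu+1}(x)/\mathcal{I}_{\nu}(x)$, the left inequality in (\ref{mm0}) reads $p(B-A)>1-A$; since $B-A<0$ for $x>0$ this is exactly $p<H_\nu(x)$, and similarly the right inequality is $q>H_\nu(x)$. Consequently (\ref{mm0}) holds for all $x>0$ if and only if $p\le\inf_{x>0}H_\nu$ and $q\ge\sup_{x>0}H_\nu$, so the theorem reduces to proving that $H_\nu$ is strictly increasing on $(0,\infty)$ with $H_\nu(0^+)=\frac{\nu+1}{\nu+2}$ and $H_\nu(\infty)=1$.

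The endpoint values are straightforward. From $\mathcal{I}_\mu(x)=1+\frac{x^2}{4(\mu+1)}+O(x^4)$ I get $\frac{\mathcal{I}_{\nu+1}-1}{\mathcal{I}_\nu-1}\to\frac{\nu+1}{\nu+2}$ and $\frac{\mathcal{I}_\nu}{\mathcal{I}_{\nu+1}}\to1$ as $x\to0^+$, whence $H_\nu(0^+)=\frac{\nu+1}{\nu+2}$; and from $I_\mu(x)\sim e^{x}/\sqrt{2\pi x}$ the functions $\mathcal{I}_\mu(x)$ blow up, the constants $1$ become negligible, and $H_\nu(x)\to1$. Since $\frac{\nu+1}{\nu+2}<1$, the monotonicity will place the infimum at $0^+$ and the supremum at $\infty$, matching the constants $p\le\frac{\nu+1}{\nu+2}$, $q\ge1$, with strictness coming from the fact that neither endpoint value is attained.

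For the monotonicity I would invoke Lemma \ref{l1} with $N(x)=\mathcal{I}_{\nu+1}(x)-1$ and $D(x)=\mathcal{I}_{\nu+1}(x)-\mathcal{I}_{\nu+1}(x)/\mathcal{I}_{\nu}(x)$, both vanishing at $0$, so that $H_\nu=N/D$ inherits the monotonicity of $N'/D'$. Using the differentiation formula $\mathcal{I}_\mu'(x)=\frac{x}{2(\mu+1)}\mathcal{I}_{\mu+1}(x)$, the plus-sign analogue of (\ref{555}), a direct computation gives
\[
\frac{N'(x)}{D'(x)}=\frac{1}{1+\dfrac{\tilde L_\nu(x)}{\mathcal{I}_\nu(x)}},\qquad \tilde L_\nu(x)=\frac{\nu+2}{\nu+1}\cdot\frac{\mathcal{I}_{\nu+1}^2(x)}{\mathcal{I}_\nu(x)\mathcal{I}_{\nu+2}(x)}-1,
\]
in complete parallel with $L_\nu$ in the previous proof. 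Hence $N'/D'$ is increasing exactly when $\tilde L_\nu/\mathcal{I}_\nu$ is decreasing, and since $\mathcal{I}_\nu\ge1$ is increasing it suffices to show that $\tilde L_\nu$ is positive and decreasing on $(0,\infty)$.

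This is the crux, and it is the exact step that imposed the restriction $\nu\le0$ before; for the modified functions I expect it to hold for all $\nu>-1$. Writing $Q_\nu(x)=\mathcal{I}_{\nu+1}^2(x)/\big(\mathcal{I}_\nu(x)\mathcal{I}_{\nu+2}(x)\big)$, the reverse Turán inequality $\mathcal{I}_\nu\mathcal{I}_{\nu+2}>\mathcal{I}_{\nu+1}^2$ gives $Q_\nu<1$; and since $\tilde L_\nu=\frac{\nu+2}{\nu+1}Q_\nu-1$ is an increasing affine function of $Q_\nu$, both the required monotonicity and the positivity $\tilde L_\nu>0$ (i.e. $Q_\nu>\frac{\nu+1}{\nu+2}=Q_\nu(\infty)$) follow once $Q_\nu$ is shown to be decreasing. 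Taking the logarithmic derivative of $Q_\nu$ and applying the differentiation formula, the inequality $Q_\nu'\le0$ reduces to
\[
\frac{2}{\nu+2}\frac{\mathcal{I}_{\nu+2}}{\mathcal{I}_{\nu+1}}\le\frac{1}{\nu+1}\frac{\mathcal{I}_{\nu+1}}{\mathcal{I}_\nu}+\frac{1}{\nu+3}\frac{\mathcal{I}_{\nu+3}}{\mathcal{I}_{\nu+2}},
\]
which, after writing $w_\mu(x)=I_{\mu+1}(x)/I_\mu(x)$, is precisely the convexity in the order $2w_{\nu+1}\le w_\nu+w_{\nu+2}$. I would settle this either from the recurrence $1/w_\mu=w_{\mu+1}+2(\mu+1)/x$, or by the template of the previous proof, differentiating $\tilde L_\nu$ explicitly and bounding with the reverse Turán inequality together with $\mathcal{I}_\nu\ge\mathcal{I}_{\nu+1}>0$; the key difference from the Bessel case is that for $I_\mu$ every factor entering these estimates keeps a fixed sign on all of $(0,\infty)$ for each $\nu>-1$, which is what removes the need for any restriction on $\nu$. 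With $H_\nu$ increasing and the endpoint values in hand, the sharp constants follow from the reduction in the first paragraph.
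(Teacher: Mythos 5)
Your first paragraph (the reduction of the ``if and only if'' statement to monotonicity of $H_\nu$ plus the two limits), your endpoint computations, and the identity $N'/D'=1/\bigl(1+\tilde L_\nu/\mathcal{I}_\nu\bigr)$ are all correct, and your route is genuinely different from the paper's. But the step you yourself call the crux --- that $Q_\nu=\mathcal{I}_{\nu+1}^2/(\mathcal{I}_\nu\mathcal{I}_{\nu+2})$ is decreasing on $(0,\infty)$, equivalently $2w_{\nu+1}\le w_\nu+w_{\nu+2}$ with $w_\mu=I_{\mu+1}/I_\mu$ --- is never proved, and neither of your two suggested routes closes it. First, the recurrence $1/w_\mu=w_{\mu+1}+2(\mu+1)/x$ alone cannot do it: eliminating $w_\nu$ and $w_{\nu+2}$ turns the claim into an inequality for the single unknown $w_{\nu+1}(x)$,
\[
\frac{1}{w_{\nu+1}+\frac{2(\nu+1)}{x}}+\frac{1}{w_{\nu+1}}-\frac{2(\nu+2)}{x}\;\ge\;2w_{\nu+1},
\]
which degenerates at both ends (as $x\to0^+$ and as $x\to\infty$ the two sides agree to leading order), so it is sensitive to the sub-leading behavior of $w_{\nu+1}$ and cannot follow from the recurrence without sharp auxiliary bounds that you neither state nor verify. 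Second, the ``template of the previous proof'' with Tur\'an-type bounds also fails in its naive form: $Q_\nu'\le0$ is equivalent to
\[
\frac{1}{\nu+2}\,\mathcal{I}_\nu\mathcal{I}_{\nu+2}^2\;\le\;\frac{1}{2(\nu+1)}\,\mathcal{I}_{\nu+1}^2\mathcal{I}_{\nu+2}+\frac{1}{2(\nu+3)}\,\mathcal{I}_\nu\mathcal{I}_{\nu+1}\mathcal{I}_{\nu+3},
\]
and bounding the right side from below by the two available Tur\'an inequalities $\mathcal{I}_{\nu+1}^2\ge\frac{\nu+1}{\nu+2}\mathcal{I}_\nu\mathcal{I}_{\nu+2}$ and $\mathcal{I}_{\nu+1}\mathcal{I}_{\nu+3}\ge\mathcal{I}_{\nu+2}^2$ gives only $\bigl(\frac{1}{2(\nu+2)}+\frac{1}{2(\nu+3)}\bigr)\mathcal{I}_\nu\mathcal{I}_{\nu+2}^2$, which is strictly smaller than the required $\frac{1}{\nu+2}\mathcal{I}_\nu\mathcal{I}_{\nu+2}^2$. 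This is no accident: monotonicity of $Q_\nu$ is strictly stronger than the pointwise Tur\'an inequalities (it implies both of them via the limits $Q_\nu(0^+)=1$ and $Q_\nu(\infty)=\frac{\nu+1}{\nu+2}$), so it cannot be deduced from them.

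The gap is real but fixable, and the fix uses exactly the paper's tools. By the Cauchy product formula quoted in the paper's proof, $I_{\nu+1}^2(x)$ and $I_\nu(x)I_{\nu+2}(x)$ are $x^{2\nu+2}$ times power series in $x^2$ with positive coefficients, and the ratio of their $n$-th coefficients is $\Gamma(\nu+n+1)\Gamma(\nu+n+3)/\Gamma^2(\nu+n+2)=\frac{\nu+n+2}{\nu+n+1}$, strictly decreasing in $n$; Lemma \ref{l2} then gives that $Q_\nu$ is strictly decreasing on $(0,\infty)$ for every $\nu>-1$, which completes your argument. For comparison, the paper bypasses the $\tilde L_\nu$ reduction altogether: it clears denominators, writing $H_\nu=h_{\nu,1}/h_{\nu,2}$ with $h_{\nu,1}=\mathcal{I}_\nu\mathcal{I}_{\nu+1}-\mathcal{I}_\nu$ and $h_{\nu,2}=\mathcal{I}_\nu\mathcal{I}_{\nu+1}-\mathcal{I}_{\nu+1}$, expands $h_{\nu,1}'$ and $h_{\nu,2}'$ by the same Cauchy product, proves the coefficient ratio $C_n$ is increasing, and then applies Lemmas \ref{l2} and \ref{l1}. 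Either way, a power-series coefficient comparison is the step that carries the theorem, and it is precisely the step your proposal leaves unproved.
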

\begin{proof}
Let $\nu>-1$, we define the function $H_\nu$ on $(0,\infty)$ by 
\begin{equation*}
H_\nu(x)=\frac{\mathcal{I}_{\nu+1}(x)-1}{\mathcal{I}_{\nu+1}(x)-\frac{\mathcal{I}_{\nu+1}(x)}{\mathcal{I}_{\nu}(x)}}=\frac{\mathcal{I}_{\nu+1}(x)\mathcal{I}_{\nu}(x)-\mathcal{I}_{\nu}(x)}{\mathcal{I}_{\nu+1}(x)\mathcal{I}_{\nu}(x)-\mathcal{I}_{\nu+1}(x)}=\frac{h_{\nu,1}(x)}{h_{\nu,2}(x)},
\end{equation*}
where $h_{\nu,1}(x)=\mathcal{I}_{\nu+1}(x)\mathcal{I}_{\nu}(x)-\mathcal{I}_{\nu}(x)$ and $h_{\nu,2}(x)=\mathcal{I}_{\nu+1}(x)\mathcal{I}_{\nu}(x)-\mathcal{I}_{\nu+1}(x).$
By using the differentiation formula [\cite{wat}, p. 79]
\begin{equation}\label{mm}
\mathcal{I}_{\nu}^{\prime}(x)=\frac{x}{2(\nu+1)}\mathcal{I}_{\nu+1}(x)
\end{equation}
can easily show that
\begin{equation}
h_{\nu,1}^{\prime}(x)=\frac{x}{2(\nu+1)}\mathcal{I}_{\nu+1}^{2}(x)+\frac{x}{2(\nu+2)}\mathcal{I}_{\nu}(x)\mathcal{I}_{\nu+2}(x)-\frac{x}{2(\nu+1)}\mathcal{I}_{\nu+1}(x),
\end{equation}
and
\begin{equation}
h_{\nu,2}^{\prime}(x)=\frac{x}{2(\nu+1)}\mathcal{I}_{\nu+1}^{2}(x)+\frac{x}{2(\nu+2)}\mathcal{I}_{\nu}(x)\mathcal{I}_{\nu+2}(x)-\frac{x}{2(\nu+2)}\mathcal{I}_{\nu+2}(x)
\end{equation}
Using the Cauchy product 
\begin{equation}
I_{\mu}(x)I_{\nu}(x)=\sum_{n\geq0}\frac{\Gamma(\nu+\mu+2n+1)x^{\nu+\mu+2n}}{2^{\mu+\nu+2n}\Gamma(n+1)\Gamma(\nu+\mu+n+1)\Gamma(\mu+n+1)\Gamma(\nu+n+1)}
\end{equation}
we obtain 
\begin{equation}
h_{\nu,1}^{\prime}(x)=\sum_{n\geq0}A_n(\nu)x^{2n}
\end{equation}
and
\begin{equation}
h_{\nu,2}^{\prime}(x)=\sum_{n\geq0}B_n(\nu)x^{2n}
\end{equation}
where
\begin{equation}
A_{n}(\nu)=\frac{\Gamma(\nu+1)\Big(\Gamma(\nu+2)\Gamma(2\nu+2n+4)-\Gamma(2\nu+n+3)\Gamma(\nu+n+3)\Big)}{2^{2n+1}\Gamma(n+1)\Gamma(\nu+n+2)\Gamma(\nu+n+3)\Gamma(2\nu+n+3)}
\end{equation}
and
\begin{equation}
B_{n}(\nu)=\frac{\Gamma(\nu+2)\Big(\Gamma(\nu+1)\Gamma(2\nu+2n+4)-\Gamma(2\nu+n+3)\Gamma(\nu+n+2)\Big)}{2^{2n+1}\Gamma(n+1)\Gamma(\nu+n+2)\Gamma(\nu+n+3)\Gamma(2\nu+n+3)}
\end{equation}
Now, we define the sequence $C_n=\frac{A_n}{B_n}$ for $n=0,1,...,$ thus
$$C_n(\nu)=\frac{\Gamma(\nu+1)\Big(\Gamma(\nu+2)\Gamma(2\nu+2n+4)-\Gamma(2\nu+n+3)\Gamma(\nu+n+3)\Bigg)}{\Gamma(\nu+2)\Big(\Gamma(\nu+1)\Gamma(2\nu+2n+4)-\Gamma(2\nu+n+3)\Gamma(\nu+n+2)\Big)}
$$
So, for $\nu>-1$ and $n=0,1,...,$ we get 
\begin{equation}
\begin{split}
\frac{C_{n+1}(\nu)}{C_{n}(\nu)}&=\frac{\left[\Gamma(\nu+2)\Gamma(2\nu+2n+6)-\Gamma(2\nu+n+4)\Gamma(\nu+n+4)\right]}{\left[\Gamma(\nu+1)\Gamma(2\nu+2n+6)-\Gamma(2\nu+n+4)\Gamma(\nu+n+3)\right]}\\
&\times \frac{\left[\Gamma(\nu+1)\Gamma(2\nu+2n+4)-\Gamma(2\nu+n+3)\Gamma(\nu+n+2)\right]}{\left[\Gamma(\nu+2)\Gamma(2\nu+2n+4)-\Gamma(2\nu+n+3)\Gamma(\nu+n+3)\right]}\\
&=\frac{K_n^1(\nu)}{K_n^2(\nu)}
\end{split}
\end{equation}
where
\begin{equation*}
\begin{split}
K_n^1(\nu)&=\left[\Gamma(\nu+2)\Gamma(2\nu+2n+6)-\Gamma(2\nu+n+4)\Gamma(\nu+n+4)\right]\left[\Gamma(\nu+1)\Gamma(2\nu+2n+4)-\Gamma(2\nu+n+3)\Gamma(\nu+n+2)\right]\\
&=\underbrace{\Gamma(\nu+1)\Gamma(\nu+2)\Gamma(2\nu+2n+6)\Gamma(2\nu+2n+4)}_{A_1}-\underbrace{\Gamma(\nu+2)\Gamma(2\nu+2n+6)\Gamma(2\nu+n+3)\Gamma(\nu+n+2)}_{B_1}\\
&-\underbrace{\Gamma(\nu+1)\Gamma(2\nu+n+4)\Gamma(\nu+n+4)\Gamma(2\nu+2n+4)}_{C_1}+\underbrace{\Gamma(2\nu+n+4)\Gamma(\nu+n+4)\Gamma(2\nu+n+3)\Gamma(\nu+n+2)}_{D_1}
\end{split}
\end{equation*}
and 
\begin{equation*}
\begin{split}
K_n^2(\nu)&=\left[\Gamma(\nu+1)\Gamma(2\nu+2n+6)-\Gamma(2\nu+n+4)\Gamma(\nu+n+3)\right]\left[\Gamma(\nu+2)\Gamma(2\nu+2n+4)-\Gamma(2\nu+n+3)\Gamma(\nu+n+3)\right]\\
&=\underbrace{\Gamma(\nu+1)\Gamma(\nu+2)\Gamma(2\nu+2n+6)\Gamma(2\nu+2n+4)}_{A_1}-\underbrace{\Gamma(\nu+1)\Gamma(2\nu+2n+6)\Gamma(2\nu+n+3)\Gamma(\nu+n+3)}_{B_2}\\
&-\underbrace{\Gamma(\nu+2)\Gamma(2\nu+n+4)\Gamma(\nu+n+3)\Gamma(2\nu+2n+4)}_{C_2}+\underbrace{\Gamma(2\nu+n+4)\Gamma^2(\nu+n+3)\Gamma(2\nu+n+3)}_{D_2}
\end{split}
\end{equation*}
Thus $$K_n^1(\nu)-K_n^2(\nu)=(B_2-B_1)+(C_2-C_1)+(D_1-D_2)$$
A simple calculation we obtain 
$$B_2-B_1=(n+1)\Gamma(\nu+1)\Gamma(\nu+n+2)\Gamma(2\nu+n+3)\Gamma(2\nu+2n+6)$$
and
$$C_2-C_1=-(n+2)\Gamma(\nu+1)\Gamma(2\nu+2n+4)\Gamma(2\nu+n+4)\Gamma(\nu+n+3)$$
and
$$D_1-D_2=\Gamma(2\nu+n+4)\Gamma(2\nu+n+3)\Gamma(\nu+n+3)\Gamma(\nu+n+2)\geq0.$$
Then simple computations lead to
\begin{equation*}
\begin{split}
B_2-B_1+(C_2-C_1)&=\Gamma(\nu+1)\Gamma(2\nu+2n+4)\Gamma(2\nu+n+3)\Gamma(\nu+n+2)\Bigg((n+1)(2\nu+2n+5)(2\nu+2n+4)\\
&-(n+2)(2\nu+2n+3)(\nu+n+2)\Bigg)\\
&\geq P_n(\nu)(n+1)\Gamma(\nu+1)\Gamma(2\nu+2n+4)\Gamma(2\nu+n+3)\Gamma(\nu+n+2)
\end{split}
\end{equation*}
where 
\begin{equation*}
\begin{split}
P_n(\nu)&=2\nu^2+(4n+11)\nu+2n^2+11n+14\\
&=(\nu+n+2)(2\nu+2n+7)>0,
\end{split}
\end{equation*}
for all $\nu>-1$ and $n\in\mathbb{N}.$ Therefore, the sequence $(C_n)_n$ is increasing, we obtain that the function $\frac{h_{\nu,1}^{\prime}(x)}{h_{\nu,2}^{\prime}(x)}$ is increasing on $(0,\infty)$ too ( by Lemma \ref{l1}). Thus $H_\nu(x)=\frac{h_{\nu,1}(x)-h_{\nu,1}(0^+)}{h_{\nu,2}(x)-h_{\nu,2}(0^+)}$ is also increasing on $(0,\infty)$  by Lemma \ref{l2}. So, 
$$\lim_{x\longrightarrow0^{+}}H_{\nu}(x)=C_{0}(\nu)=\frac{\nu+1}{\nu+2},$$
and using the asymptotic formula [\cite{1}, p. 377]
\[
I_{\nu}(x)=\frac{e^{x}}{\sqrt{2\pi x}}\left[1-\frac{4\nu^{2}-1}{1!(8x)}+\frac{(4\nu^{2}-1)(4\nu^{2}-9)}{2!(8x)^{2}}-...\right]\]
which holds for large values of $x$ and for fixed $\nu > - 1$, we obtain
$$\lim_{x\longrightarrow\infty}H_{\nu}(x)=1.$$
So the proof of Theorem \ref{t3} is complete.
\end{proof}
\begin{theorem}\label{t2} Let $\nu>-1$, the following inequalities 
\begin{equation}\label{mm1}
\left(1-p\right)\frac{1}{\mathcal{I}_{\nu+1}(x)}+p\frac{\mathcal{I}_{\nu}(x)}{\mathcal{I}_{\nu+1}(x)}>1>\left(1-q\right)\frac{1}{\mathcal{I}_{\nu+1}(x)}+q\frac{\mathcal{I}_{\nu}(x)}{\mathcal{I}_{\nu+1}(x)},
\end{equation}
holds for all $x\in(0,\infty)$ if and only if $p\geq\frac{\nu+1}{\nu+2}$ and $q\leq0$.
\end{theorem}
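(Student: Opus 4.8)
The plan is to mirror the architecture of the proof of Theorem 1, replacing $\mathcal{J}_\nu$ by $\mathcal{I}_\nu$ and reducing both inequalities in (\ref{mm1}) to two-sided bounds on a single monotone ratio. Writing $u=1/\mathcal{I}_{\nu+1}(x)$ and $v=\mathcal{I}_{\nu}(x)/\mathcal{I}_{\nu+1}(x)$, the left inequality becomes $(1-p)u+pv>1$, i.e.\ $p(v-u)>1-u$, and the right one becomes $q(v-u)<1-u$. Since $\mathcal{I}_{\nu}(x)>1$ on $(0,\infty)$ we have $v-u=(\mathcal{I}_{\nu}(x)-1)/\mathcal{I}_{\nu+1}(x)>0$, so both reduce to comparisons with
$$F_\nu(x)=\frac{1-u}{v-u}=\frac{\mathcal{I}_{\nu+1}(x)-1}{\mathcal{I}_{\nu}(x)-1}=\frac{f_{\nu,1}(x)}{f_{\nu,2}(x)},$$
where $f_{\nu,1}(x)=\mathcal{I}_{\nu+1}(x)-1$ and $f_{\nu,2}(x)=\mathcal{I}_{\nu}(x)-1$. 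Thus (\ref{mm1}) holds for all $x$ exactly when $p\geq\sup_{x}F_\nu(x)$ and $q\leq\inf_{x}F_\nu(x)$, and the whole theorem comes down to the monotonicity and boundary values of $F_\nu$.

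Next I would show $F_\nu$ is strictly decreasing on $(0,\infty)$. Since $f_{\nu,1}(0^+)=f_{\nu,2}(0^+)=0$, Lemma \ref{l1} lets me pass to $f_{\nu,1}'/f_{\nu,2}'$. Applying the differentiation formula (\ref{mm}) gives $f_{\nu,1}'(x)=\frac{x}{2(\nu+2)}\mathcal{I}_{\nu+2}(x)$ and $f_{\nu,2}'(x)=\frac{x}{2(\nu+1)}\mathcal{I}_{\nu+1}(x)$, hence
$$\frac{f_{\nu,1}'(x)}{f_{\nu,2}'(x)}=\frac{\nu+1}{\nu+2}\cdot\frac{\mathcal{I}_{\nu+2}(x)}{\mathcal{I}_{\nu+1}(x)}.$$
The core step is to show $\mathcal{I}_{\nu+2}(x)/\mathcal{I}_{\nu+1}(x)$ is decreasing; here the modified Bessel case is markedly cleaner than the oscillatory one of Theorem 2, because the power series of $\mathcal{I}_{\nu+1}$ and $\mathcal{I}_{\nu+2}$ have all positive coefficients, so Lemma \ref{l2} applies directly and no Tur\'an-type inequality is needed. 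The quotient of their $n$-th coefficients is $(\nu+2)_n/(\nu+3)_n=(\nu+2)/(\nu+n+2)$, which is strictly decreasing in $n$; by Lemma \ref{l2} the ratio $\mathcal{I}_{\nu+2}/\mathcal{I}_{\nu+1}$ is strictly decreasing, so $f_{\nu,1}'/f_{\nu,2}'$ decreases and, by Lemma \ref{l1}, so does $F_\nu$.

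Finally I would compute the two boundary values. From the leading term $\mathcal{I}_{\mu}(x)-1\sim x^2/(4(\mu+1))$ as $x\to0^+$ one gets $\lim_{x\to0^+}F_\nu(x)=\frac{\nu+1}{\nu+2}$, while the asymptotic $I_\mu(x)\sim e^x/\sqrt{2\pi x}$ already invoked in Theorem \ref{t3} yields $\mathcal{I}_{\nu+1}(x)/\mathcal{I}_{\nu}(x)\sim 2(\nu+1)/x\to0$, so $\lim_{x\to\infty}F_\nu(x)=0$. Since $F_\nu$ decreases strictly from $\frac{\nu+1}{\nu+2}$ to $0$ without attaining either value, $\sup_x F_\nu=\frac{\nu+1}{\nu+2}$ and $\inf_x F_\nu=0$, giving the sharp constants $p\geq\frac{\nu+1}{\nu+2}$ and $q\leq0$ in both directions. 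I do not expect a genuine obstacle in this theorem; the only places demanding care are the sign bookkeeping in the reduction (the factor $v-u$ is positive precisely because $\mathcal{I}_\nu>1$, so no inequality is reversed) and checking that the extremal values are limits rather than attained, which is what makes the threshold constants exact.
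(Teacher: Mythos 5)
Your proposal is correct and follows essentially the same route as the paper: the same ratio $\Phi_\nu(x)=\bigl(\mathcal{I}_{\nu+1}(x)-1\bigr)/\bigl(\mathcal{I}_{\nu}(x)-1\bigr)$, the same use of the differentiation formula (\ref{mm}) together with Lemma \ref{l2} (coefficient ratio $\frac{\nu+1}{\nu+n+2}$ decreasing) and the monotone l'Hospital rule of Lemma \ref{l1}, and the same boundary limits $\frac{\nu+1}{\nu+2}$ at $0^+$ and $0$ at $\infty$. Your write-up is in fact slightly more complete, since you make explicit the reduction of (\ref{mm1}) to $p\geq\sup_x\Phi_\nu$ and $q\leq\inf_x\Phi_\nu$ and note that the extremal values are unattained limits, steps the paper leaves implicit.
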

\begin{proof}Let $\nu>-1$ and $x\in(0,\infty)$, we define the function $\Phi_{\nu}(x)$ by
\begin{equation}
\Phi_{\nu}(x)=\frac{\frac{1}{\mathcal{I}_{\nu+1}(x)}-1}{\frac{1}{\mathcal{I}_{\nu+1}(x)}-\frac{\mathcal{I}_{\nu}(x)}{\mathcal{I}_{\nu+1}(x)}}=\frac{1-\mathcal{I}_{\nu+1}(x)}{1-\mathcal{I}_{\nu}(x)}=\frac{\varphi_{\nu,1} (x)}{\varphi_{\nu,2} (x)},
\end{equation}
where $\varphi_{\nu,1}(x)=1-\mathcal{I}_{\nu+1}(x)$ and $\varphi_{\nu,2}(x)=1-\mathcal{I}_{\nu}(x).$ By again using the differentiation formula (\ref{mm}) we get 
\begin{equation}
\frac{\varphi_{\nu,1}^{\prime}(x)}{\varphi_{\nu,2}^{\prime}(x)}=\frac{\nu+1}{\nu+2}.\frac{\mathcal{I}_{\nu+2}(x)}{\mathcal{I}_{\nu+1}(x)}=\frac{\sum_{n=0}^{\infty}a_n(\nu)x^{2n}}{\sum_{n=0}^{\infty}b_n(\nu)x^{2n}},
\end{equation}
where $a_n(\nu)=\frac{\Gamma(\nu+2)}{2^{2n}\Gamma(n+1)\Gamma(\nu+n+3)}$ and $b_n(\nu)=\frac{\Gamma(\nu+1)}{2^{2n}\Gamma(n+1)\Gamma(\nu+n+2)}.$ Let $$c_n(\nu)=\frac{a_n(\nu)}{b_n(\nu)}=\frac{\nu+1}{\nu+n+2},\textrm{for}\;n=0,1,....$$
We conclude that $c_n(\nu)$ is decreasing for $n=0,1,...$ and $\frac{g_{1}^{\prime}(x)}{g_{2}^{\prime}(x)}$ is decreasing on $(0,\infty)$ by Lemma \ref{l2}. Thus
$$\Phi_{\nu}(x)=\frac{\varphi_{\nu,1} (x)}{\varphi_{\nu,2} (x)}=\frac{\varphi_{\nu,1} (x)-\varphi_{\nu,1} (0)}{\varphi_{\nu,2}(x)-\varphi_{\nu,2} (0)},$$
is decreasing on $(0,\infty)$ by Lemma \ref{l1}.
Furthermore,
\begin{equation*}
\lim_{x\longrightarrow0^{+}}\Phi_{\nu}(x)=c_{0}(\nu)=\frac{\nu+1}{\nu+2},
\end{equation*}
and
\begin{equation*}
\lim_{x\longrightarrow\infty}\Phi_{\nu}(x)=0.
\end{equation*}
Alternatively, inequality (\ref{mm1}) can be proved by using the Mittag-Leffler expansion  for the modified Bessel functions
of first kind, which becomes [\cite{erd}, Eq. 7.9.3]
\begin{equation}\label{44}
\frac{I_{\nu+1}(x)}{I_{\nu}(x)}=\sum_{n=1}^{\infty}\frac{2x}{j_{\nu,n}^{2}+x^{2}},
\end{equation}
where $0<j_{\nu,1}<j_{\nu,2}<...<j_{\nu,n}<...$, are the positive zeros of the Bessel function  $J_\nu$, we obtain that 
$$\frac{g_{1}^{\prime}(x)}{g_{2}^{\prime}(x)}=2(\nu+1)\frac{I_{\nu+2}(x)}{xI_{\nu+1}(x)}=4(\nu+1)\sum_{n=1}^{\infty}\frac{1}{j_{\nu,n}^{2}+x^{2}}.$$
\end{proof}
Clearly,
$$\left(\frac{g_{1}^{\prime}(x)}{g_{2}^{\prime}(x)}\right)^{\prime}=-8(\nu+1)\sum_{n=1}^{\infty}\frac{x}{(x^{2}+j_{\nu,n}^{2})^{2}},$$
for all $x>0$ and $\nu>-1$, which implies that $G_\nu(x)$ is decreasing for all $\nu>-1.$ On the other hand, using  the Rayleigh formula [\cite{wat}, p. 502 ]
\begin{equation}\label{55}
\sum_{n=1}^{\infty}\frac{1}{j_{\nu,n}^{2}}=\frac{1}{4(\nu+1)}.
\end{equation}
we get
\begin{equation*}
\lim_{x\longrightarrow0^{+}}G_{\nu}(x)=\frac{\nu+1}{\nu+2}.
\end{equation*}
So, the proof of Theorem \ref{t2} is complete.
\begin{remark} Since  $\mathcal{I}_{-\frac{1}{2}}(x)=\cosh x$ and $\mathcal{I}_{\frac{1}{2}}(x)=\frac{\sinh x}{x}$,  we find that the inequalities (\ref{mm0}) and (\ref{mm1}) is the generalization of inequalities (\ref{sw03}) and (\ref{sw04}).
\end{remark}

\end{document}